\documentclass[11pt,reqno]{amsart}
\usepackage[T1]{fontenc}
\usepackage{lmodern,microtype}
\usepackage[margin=1.05in]{geometry}
\usepackage{amsmath,amssymb,amsthm,mathtools,booktabs,array,graphicx}
\usepackage{xcolor,listings}
\usepackage[hidelinks]{hyperref}
\hypersetup{pdftitle={An explicit rational lemniscate counterexample with certified separators},pdfauthor={Jiang Yang and Xin Zhang}}
\newtheorem{theorem}{Theorem}[section]
\newtheorem{lemma}[theorem]{Lemma}
\newtheorem{proposition}[theorem]{Proposition}
\newtheorem{corollary}[theorem]{Corollary}
\theoremstyle{remark}\newtheorem{remark}[theorem]{Remark}
\newcommand{\C}{\mathbb C}
\newcommand{\R}{\mathbb R}
\newcommand{\Q}{\mathbb Q}
\newcommand{\HH}{\mathcal H^1}
\newcommand{\eps}{\varepsilon}
\DeclareMathOperator{\Rea}{Re}

\title[A rational lemniscate counterexample]{An explicit rational lemniscate counterexample with certified separators}
\author[Jiang Yang]{Jiang Yang}
\address[Jiang Yang]{School of Mathematical Sciences, Guangxi Minzu University, Nanning 530006, P.R. China}
\email{yangjiangdy@126.com}
\author[Xin Zhang]{Xin Zhang$^{*}$}
\address[Xin Zhang]{School of Mathematical Sciences, Guangxi Minzu University, Nanning 530006, P.R. China}
\email{15247161004@163.com}
\thanks{$^{*}$Corresponding author: Xin Zhang (\href{mailto:15247161004@163.com}{15247161004@163.com}).}
\date{23 September 2026}
\keywords{Polynomial lemniscate, rectifiable path, Hausdorff measure, exact rational certificate, Erd\H{o}s problem 1041}
\begin{document}
\begin{abstract}
In the setting of the previously reported degree-seven counterexample of ani, we present an explicit monic polynomial with real rational coefficients and a direct proof by certified separators. The polynomial has seven simple zeros in the open unit disk. Every continuous path joining distinct zeros in its closed unit sublevel set has image of one-dimensional Hausdorff measure greater than $2+10^{-8}$. Five piecewise-linear graphs exclude twenty of the twenty-one zero pairs; two short forbidden segments force a quantitative detour for the remaining pair. Exact rational Bernstein certificates verify the graph inequalities on forty-four terminal intervals and their unbounded tails. The proof does not require critical-point counting or analytic inverse branches. We separately record the general continuum and dilation arguments that transfer path-length obstructions to Hausdorff measure and to closed sublevel sets. Those transfers are not asserted as new principles. The focus is the explicit real-coefficient construction, its separation certificates, and its verified numerical margin, rather than a first negative answer to the length-two question or a disproof of every larger universal bound.
\end{abstract}
\maketitle

\section{Introduction and statement of the result}
In Problem 5 of \emph{Metric properties of polynomials}, Erd\H{o}s, Herzog and Piranian~\cite[p.~139]{EHP} asked whether a monic polynomial of degree at least two, all of whose zeros lie in the open unit disk, must have two zeros joined by a path of length less than $2$ in the open sublevel set $\{z:|P(z)|<1\}$. We call this the \emph{length-two question}. It is also indexed as Erd\H{o}s problem 1041. Multiplicities cause no difficulty for a counterexample having only simple zeros.

Cook's preprint~\cite{Cook} reports a degree-seven counterexample due to the contributor ani and gives a Lean formalization. Its principal positive results concern monic trinomials and a sufficient condition in terms of the least critical-value modulus; these are compatible with the reported counterexample. The pinned counterexample source~\cite{AniFormal} specifies a Gaussian-rational instance and proves that every root-joining path in the open unit sublevel set has total variation greater than $2$. This prior negative result is the starting context of the present paper, not a conclusion for which priority is claimed here.

The present work supplies a degree-seven polynomial with real rational coefficients, together with a proof based on five global graph separators and exact rational certificates. A cubic factorization locates the zeros; a sign table excludes all but one pair; and two local obstacles give the explicit length excess $10^{-8}$. The proof of these facts does not use critical-point enumeration or analytic continuation of inverse branches. Both this construction and the pinned instance of ani share the general device of a structured perturbation of $z^7-1$ followed by contraction of the zero circle. Their displayed coefficients and verification data differ. We do not assert that the perturbation mechanism itself is new, or claim novelty solely from the difference of coefficients.

The strongest formulation is stated in Theorem~\ref{thm:main} for completeness. Its use of Hausdorff measure and a closed sublevel set must be distinguished from the construction-specific work. Lemma~\ref{lem:continuum} converts a lower bound for all rectifiable connectors into a lower bound for the Hausdorff measure of any continuous connector's image. Proposition~\ref{prop:transfers} also shows how a slight dilation converts any suitable open-sublevel counterexample into a closed-sublevel counterexample with a positive margin. These are general consequences, not separate claims of originality. What is verified specifically here is the stated real rational polynomial, the complete separator data, and the numerical margin for that fixed instance.

For a continuous path $\gamma:[0,1]\to\C$, write $\ell(\gamma)$ for its total variation, allowing $+\infty$, and write $\HH(\gamma([0,1]))$ for the one-dimensional Hausdorff measure of its image. We use the normalization in which a straight line segment has measure equal to its Euclidean length. These two notions of length can differ for a path that retraces itself. Both are covered below.

Set
\begin{equation}\label{eq:constants}
 \eps=\frac1{1000},\qquad \delta=10^{-30},\qquad r=1-\delta,
 \qquad \eta=10^{-8},\qquad a=\frac{11}{4},\quad b=-\frac{55}{16},
\end{equation}
and define
\begin{align}
 Q(z)&=z^7-1+\eps^6(z-z^6)+b\eps^5(z^2-z^5)
                +a\eps^4(z^3-z^4),\label{eq:Q}\\
 P(z)&=r^7Q(z/r).\label{eq:P}
\end{align}
The scaling in \eqref{eq:P} preserves monicity.

\begin{theorem}\label{thm:main}
The polynomial $P\in\Q[z]$ defined by \eqref{eq:constants}--\eqref{eq:P} is monic of degree seven and has seven simple zeros, each of modulus $r<1$. If a continuous path $\gamma:[0,1]\to\C$ joins two distinct zeros of $P$ and satisfies
\[
 |P(\gamma(t))|\leq1\qquad(0\leq t\leq1),
\]
then
\begin{equation}\label{eq:main}
 \HH(\gamma([0,1]))>2+\eta.
\end{equation}
In particular, $\ell(\gamma)>2+\eta$.
\end{theorem}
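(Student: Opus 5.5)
We would split the proof into an algebraic part, a combinatorial separation part, and a metric part. \emph{Algebra.} Monicity and rational coefficients are immediate from \eqref{eq:P}, since $r^7(z/r)^7=z^7$ and $r\in\Q$. For the zeros, we observe that $Q(z)$ is self-reciprocal up to sign: $z^7Q(1/z)=-Q(z)$. Hence $Q(z)=(z-1)\,z^3R(w)$ with $w=z+1/z$ and $R$ a real cubic. Explicitly, $(Q(z)/(z-1))/z^3$ is a palindromic sextic in disguise, so this reduction holds. We would compute $R$ exactly and show, by exact sign changes of $R$ at rational points in $(-2,2)$, that $R$ has three distinct real roots $w_1,w_2,w_3\in(-2,2)$. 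Each root gives a conjugate pair $z,\bar z$ on $|z|=1$, with $z\neq\pm1$. Together with $z=1$, this yields seven simple zeros of $Q$ on the unit circle, so the zeros of $P$ are simple and have modulus $r$. The cubic discriminant, being a nonzero rational, certifies distinctness.

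\emph{Separation.} The zeros of $Q$ are small perturbations of the seventh roots of unity, and the sublevel set of $P$ is the sublevel set of $Q$ dilated by $r$. By Lemma~\ref{lem:continuum}, it suffices to bound the length of rectifiable (indeed injective, arc) connectors. Moreover, by Proposition~\ref{prop:transfers}, we may reduce the closed condition $|P|\le1$ to a statement about $|Q|\le r^{-7}$, which is a slightly enlarged closed sublevel set of $Q$. For each of the twenty pairs other than one exceptional pair, we exhibit one of five piecewise-linear graphs $\Gamma_j$ (in suitable rotated coordinates, unbounded in both directions) that separate the two zeros and on which $|Q|>r^{-7}$. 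Such a graph obstructs every connector regardless of length, since any path from one side to the other must meet it. On each linear piece $z=z_0+tv$, the quantity $|Q(z_0+tv)|^2-r^{-14}$ is a real polynomial in $t$ with rational coefficients. We prove it positive by subdividing into terminal intervals (forty-four in total) and checking that all Bernstein coefficients are positive. On the unbounded tails we substitute $t=1/s$ and clear denominators, so that positivity becomes the same kind of certificate on $[0,s_0]$. A sign table then records which zero lies on which side of each $\Gamma_j$, and we check that every non-exceptional pair is separated by some $\Gamma_j$.

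\emph{Exceptional pair and the margin.} The remaining pair should be two neighbouring zeros $\zeta,\zeta'$ whose chord passes through the sublevel set with length near $2\sin(\pi/7)<2$. Since this chord is short, the obstruction must come from elsewhere: the perturbation is designed so that the sublevel set pinches near the chord. We would exhibit two short segments $S_1,S_2$ on which $|P|>1$, again certified by Bernstein positivity, together with the separator graphs bounding a region that contains both zeros. Then every connector must pass through one of the gaps left by the segments and the graphs. Each such gap is a small window whose position is known rationally, so the length is at least $\min$ over the windows $w$ of $\operatorname{dist}(\zeta,w)+\operatorname{dist}(w,\zeta')$, bounded below by an explicit rational computation exceeding $2+\eta$. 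The Hausdorff statement and the claim $\ell(\gamma)>2+\eta$ then follow from Lemma~\ref{lem:continuum}, using $\HH(\gamma([0,1]))\le\ell(\gamma)$.

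The main obstacle is this last step. We must choose $S_1,S_2$ so that they simultaneously lie in $\{|P|>1\}$ with a certified margin and force a detour of size at least $10^{-8}$ beyond $2$. Given the tiny perturbation scale $\eps^4\sim10^{-12}$, the geometry near the zeros is delicate. We would first locate the pinch points numerically to high precision, then place rational segments through them and tune their endpoints until both the positivity certificate and the distance bound succeed.
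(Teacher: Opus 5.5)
The algebraic and separator steps are essentially the paper's. The paper certifies $\Rea g<-\kappa$ for the rescaled polynomial $g$ (with $z=r\eps w$) rather than $|Q|^2-r^{-14}>0$, but either check works in exact arithmetic. One citation is off: Proposition~\ref{prop:transfers} dilates by $s>1$, whereas $P$ is a contraction of $Q$, so the passage to $\{|Q|\le r^{-7}\}$ is just $|P(z)|=r^7|Q(z/r)|$.

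\textbf{The genuine gap is the exceptional pair.}

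First, your picture of it is wrong. No chord between neighbouring zeros lies in $\{|P|\le1\}$: at the midpoint of the chord joining the zeros near $1$ and $e^{2\pi i/7}$, $|P|\approx|z^7-1|\approx1.48$. If such a chord did lie in the sublevel set, the theorem would be false. The petals around the zeros meet only near the origin, at scale $\eps$. The surviving pair is $r\alpha_2,r\overline{\alpha_2}$, near $re^{\pm4\pi i/7}$, not a neighbouring pair.

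Second, and more seriously, your single-window bound cannot work here. A bound $\min_w\bigl(\operatorname{dist}(\zeta,w)+\operatorname{dist}(w,\zeta')\bigr)$ exceeds $2$ only if every connector is forced out of the ellipse $E=\{z:|z-r\alpha_2|+|z-r\overline{\alpha_2}|\le2\}$. Your obstacles do not force that:
\begin{enumerate}
\item $E$ contains the origin, where $|P(0)|=r^7<1$.
\item $E$ contains admissible points just to the right of $S_+$, such as $r(-h/30+ih)$. There $\Rea g>0$, and the focal sum is roughly $2-h/70$.
\item The separator graphs stay at distance at least about $0.6\eps=60h$ from $0$, so they cannot close off small windows at the scale of $S_\pm$.
\end{enumerate}

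The missing idea is that the excess over $2$ is produced \emph{inside} $E$. It comes from an ordered chain of crossings combined with a localization of the real crossing. Take $\Gamma=\gamma/r$ with $\ell(\gamma)\le2+\eta$.
\begin{enumerate}
\item \emph{Localization.} Every real point $x_0$ of $\Gamma$ satisfies $|x_0|<3\eta$. For $t>0$ every nonconstant term of $Q(-t)$ is negative, so $|Q(-t)|\ge1+\eps^6t$. This confines negative real points of $\{|Q|\le r^{-7}\}$ to $(-D,0)$ with $D=8\cdot10^{-12}$. Real points $x_0\ge3\eta$ violate the budget $|\alpha_2-x_0|\le(1+\eta/2)/r$.
\item \emph{Ordered crossing.} Before reaching $x_0$, $\Gamma$ crosses $\Im z=h$ at some $q=x+ih$. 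By Lemma~\ref{lem:short}, $x\notin[-3h,-h/10]$. Then $|\alpha_2-q|+|q-x_0|\ge1+h\phi(x/h)-|x_0|$.
\item \emph{Convexity.} Write $\alpha_2=u+iv$. The function $\phi(k)=\sqrt{1+k^2}-uk-v$ is strictly convex with minimizer $u/v\in(-3,-1/10)$, which is exactly the range of $k$ blocked by $S_+$. Hence $\phi>1/250$ at every admissible $k$.
\item \emph{Conclusion.} The conjugate estimate below the axis gives $\ell(\gamma)>2r(1+h/250-3\eta)>2+\eta$.
\end{enumerate}

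Your closing plan, to locate pinch points numerically and tune segments, is a search strategy rather than a proof. Without the localization of $x_0$ and the ordered three-point chain, even well-placed segments would not certify any margin above $2$.
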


\begin{corollary}
The polynomial $P$ is a counterexample to the length-two question, whether length is interpreted as total variation or as Hausdorff measure of the path image.
\end{corollary}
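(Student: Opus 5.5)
The corollary should follow directly from Theorem~\ref{thm:main}. The plan is to check that $P$ meets every hypothesis of the length-two question, and then to show that every admissible path falls within the scope of the theorem.

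\emph{Hypotheses.} By Theorem~\ref{thm:main}, $P$ is monic of degree $7\geq 2$ and has seven simple zeros, each of modulus $r=1-10^{-30}<1$. So all zeros lie in the open unit disk. Since the zeros are simple, "two zeros" of $P$ means two distinct points of the zero set. The multiplicity convention therefore plays no role, as noted in the introduction.

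\emph{Reduction to the theorem.} Let $\gamma:[0,1]\to\C$ be any continuous path joining two distinct zeros of $P$ inside the open sublevel set $\{z:|P(z)|<1\}$, which is the set used in~\cite{EHP}. Then $|P(\gamma(t))|<1\leq 1$ for every $t$, so $\gamma$ satisfies the hypothesis of Theorem~\ref{thm:main}. We do not assume $\gamma$ is rectifiable or injective; the theorem covers arbitrary continuous paths. Inequality~\eqref{eq:main} then gives
\[
\HH(\gamma([0,1]))>2+\eta>2,
\]
which settles the Hausdorff-measure interpretation.

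\emph{Total variation.} For this interpretation, use the final assertion of the theorem, $\ell(\gamma)>2+\eta$. It comes from the standard inequality $\HH(\gamma([0,1]))\leq \ell(\gamma)$, valid for any continuous path, and it holds trivially when $\ell(\gamma)=+\infty$. Thus no admissible path has length less than $2$ in either sense, so the answer to the question is negative for $P$. Strictly, the theorem gives the stronger statement that even length at most $2+\eta$ is impossible, and even in the closed sublevel set.

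\emph{Main point of care.} There is no real obstacle here. The one step to check is that the open-sublevel hypothesis of the original question is stronger than the closed-sublevel hypothesis of Theorem~\ref{thm:main}. The implication therefore runs in the needed direction, and no dilation argument from Proposition~\ref{prop:transfers} is required.
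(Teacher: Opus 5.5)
Your proposal is correct and follows the paper, which treats the corollary as an immediate consequence of Theorem~\ref{thm:main}: the zeros are simple with modulus $r<1$, and a path in the open sublevel set $\{|P|<1\}$ also lies in the closed sublevel set, so both length notions exceed $2+\eta>2$. The one difference is cosmetic: the paper obtains $\ell(\gamma)>2+\eta$ directly from Proposition~\ref{prop:variation} rather than from $\HH(\gamma([0,1]))\leq\ell(\gamma)$, and either justification is valid.
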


\begin{remark}\label{rem:scope}
The theorem concerns the fixed threshold $2$. It does not disprove the existence of an absolute constant $C>2$ with the corresponding universal path property, nor does it establish that degree seven is minimal. The label JSP-000864 motivates this investigation, but no assertion about the interpretation, eligibility rules, or adjudication of a prize problem is needed for the mathematical statement above.
\end{remark}

The proof proceeds in four steps. We locate all zeros by a cubic identity. Five certified graphs then separate all pairs except one conjugate pair. A local detour estimate rules out short rectifiable paths for that pair. Finally, a finite-length continuum lemma proves \eqref{eq:main} for arbitrary continuous paths. The exact computations are specified in Section~\ref{sec:certificates} and reproduced by the checker in Appendix~\ref{app:checker}.

\section{Exact location of the zeros}\label{sec:roots}
Put
\[
 A=1-\eps^6,\qquad B=A-b\eps^5,\qquad C=B-a\eps^4,
 \qquad T_0(u)=8u^3+4Au^2+2(B-3)u+C-2A.
\]
Direct expansion gives the identity
\begin{equation}\label{eq:cubic}
 Q(z)=(z-1)z^3T_0\!\left(\frac{z+z^{-1}}2\right).
\end{equation}
The apparent singularity at zero is removable: the second factor has ascending coefficients $(1,A,B,C,B,A,1)$.

Exact rational substitution yields the signs in Table~\ref{tab:roots}. The three intervals are disjoint. The intermediate value theorem therefore provides three distinct real roots $u_1,u_2,u_3$ of $T_0$, in the listed order. A cubic has no further roots, and these three roots are simple. Since each lies in $(-1,1)$, the zeros of $Q$ are precisely
\begin{equation}\label{eq:alpha}
 1,\quad \alpha_j=u_j+iv_j,\quad \overline{\alpha_j},
 \qquad v_j=\sqrt{1-u_j^2}>0,\quad 1\leq j\leq3.
\end{equation}
They are seven distinct points on the unit circle. Thus the zeros of $P$ are simple and lie on $|z|=r$.

\begin{table}[ht]
\caption{Rational root enclosures and endpoint signs of $T_0$.}\label{tab:roots}
\centering
\begin{tabular}{cccc}
\toprule
$j$ & interval for $u_j$ & endpoint signs & interval for $v_j$\\
\midrule
1 & $(3/5,13/20)$ & $-,+$ & $(3/4,4/5)$\\
2 & $(-1/4,-1/5)$ & $+,-$ & $(24/25,49/50)$\\
3 & $(-91/100,-9/10)$ & $-,+$ & $(41/100,11/25)$\\
\bottomrule
\end{tabular}
\end{table}

The enclosures for $v_j$ follow by squaring the rational endpoints. For example, if $(l,u)$ encloses $u_j$ and $(v_-,v_+)$ is the proposed enclosure for $v_j$, it suffices that
\[
 \max(|l|,|u|)^2+v_-^2<1,
 \qquad \min(|l|,|u|)^2+v_+^2\geq1.
\]
These inequalities hold for each row. In particular, both upper-left zeros satisfy
\begin{equation}\label{eq:leftbox}
 -\frac{91}{100}<u<-\frac15,\qquad v>\frac25,
 \qquad u^2+v^2=1.
\end{equation}

\section{Five global separators}\label{sec:separators}
\subsection{The scaled polynomial}
Define
\begin{equation}\label{eq:g}
 g(w)=w^7-\eps^5w^6-b\eps^3w^5-a\eps w^4+aw^3+bw^2+w.
\end{equation}
Then
\begin{equation}\label{eq:scaling}
 Q(\eps w)=-1+\eps^7g(w),\qquad
 P(r\eps w)=r^7\bigl(-1+\eps^7g(w)\bigr).
\end{equation}
Let $\kappa=1/2000$. The exact rational inequality
\begin{equation}\label{eq:margin}
 r^7(1+\eps^7\kappa)>1
\end{equation}
shows that
\begin{equation}\label{eq:forbidden}
 \Rea g(w)<-\kappa\quad\Longrightarrow\quad |P(r\eps w)|>1.
\end{equation}
Indeed, the modulus of $-1+\eps^7g(w)$ is at least the absolute value of its real part.

\subsection{Explicit graphs}
Table~\ref{tab:vertices} specifies three polygonal curves in the $w=x+iy$ plane. Every terminating decimal in this paper denotes its exact rational value. Join consecutive vertices by line segments and attach the two rays $\lambda p_0$ and $\lambda p_m$, $\lambda\geq1$, at the first and last vertices.

\begin{table}[ht]
\caption{Base graphs. Vertices are in the original $(x,y)$ coordinates.}\label{tab:vertices}
\centering\small
\begin{tabular}{clp{9.1cm}}
\toprule
Graph & $(T,S)$ & Ordered vertices $(x,y)$\\
\midrule
$A_0$ & $(y,x)$ & $(4,-2),\ (.71,-.1),\ (.645,-.03),$\newline $(.645,.035),\ (.88,.25),\ (4,2)$\\[4pt]
$B_+$ & $(x-y,x+y)$ & $(1,4),\ (.53,.84),\ (.5325,.8325),$\newline $(.565,.81),\ (.74,.81),\ (1.1625,.9525),\ (4,2)$\\[4pt]
$C_+$ & $(x+y,-x+y)$ & $(-4,0),\ (-1.1,.555),\ (-.925,.73),$\newline $(-.925,.76),\ (-.93,.77),\ (-3,4)$\\
\bottomrule
\end{tabular}
\end{table}

In each row the linear coordinate change is invertible, the vertex $T$-coordinates strictly increase, and the first and last $T$-coordinates have opposite signs. Consequently the whole unbounded curve is the graph $S=f(T)$ of a continuous piecewise-linear function on $\R$. Define $B_-$ and $C_-$ by complex conjugation of $B_+$ and $C_+$, respectively.

\begin{lemma}\label{lem:graphcert}
Every point of each of the five complete graphs $A_0,B_+,B_-,C_+,C_-$ satisfies $\Rea g(w)<-\kappa$.
\end{lemma}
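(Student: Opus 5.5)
First, we reduce the two conjugate graphs to their originals. The coefficients of $g$ in \eqref{eq:g} are real, so $g(\bar w)=\overline{g(w)}$ and hence $\Rea g(\bar w)=\Rea g(w)$. It therefore suffices to verify the inequality on $A_0$, $B_+$ and $C_+$.

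Each complete graph is a finite union of segments $w=p_{k}+t(p_{k+1}-p_k)$, $t\in[0,1]$, together with two rays $w=\lambda p$, $\lambda\ge 1$. On a segment, $\Rea g(w)+\kappa$ is a real polynomial $h_k(t)$ of degree at most $7$ with exact rational coefficients. We expand it in the degree-seven Bernstein basis on $[0,1]$. If all Bernstein coefficients are negative, then $h_k<0$ on the segment, because the Bernstein basis is a partition of unity with nonnegative terms. Where some coefficient fails to be negative, we bisect the parameter interval, using de Casteljau subdivision in exact rationals, until every piece has all coefficients negative. The abstract reports forty-four terminal intervals, which is consistent with this scheme. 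The vertices were evidently chosen near the zero curve of $\Rea g$, for instance near $w\approx 0.645$ where $g(x)=ax^3+bx^2+x+\dots$ is small. This is exactly where subdivision will be needed, so the margin there is the delicate point.

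On a ray $w=\lambda p$, write $\Rea g(\lambda p)=\sum_{n=1}^7 c_n\lambda^n$ with $c_n=\Rea(\text{coef}_n\, p^n)$. For the endpoint directions $p=(4,\pm2)$, $(1,4)$, $(-4,0)$, $(-3,4)$, the leading term $\Rea(p^7)\lambda^7$ should be negative. For example, $\Rea((4+2i)^7)<0$ because $7\arg(4+2i)\approx 3.25$ rad, and $(-4)^7<0$; we check the others similarly. We then substitute $\lambda=1+s$, $s\ge0$, and expand in powers of $s$. If every coefficient of the resulting polynomial in $s$, including the one absorbing $+\kappa$, is negative, the tail is certified. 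If some coefficient is not negative, we instead split into $s\in[0,s_0]$, handled by Bernstein bounds as above, and $\lambda\ge1+s_0$, handled after the substitution $\lambda=1/\mu$ by a Bernstein certificate for $\mu^7(\Rea g+\kappa)$ on $[0,1/(1+s_0)]$, which has leading term $\Rea(p^7)<0$ at $\mu=0$.

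The main obstacle is the bounded segments near the zero set of $\Rea g$, where the margin $\kappa=1/2000$ is small. There the subdivision depth and the exact rational arithmetic must be carried out, and in the paper this is delegated to the checker in Appendix~\ref{app:checker}. Conceptually the proof is just: conjugation symmetry, negativity of the Bernstein coefficients on finitely many intervals, and sign-definite tail polynomials.
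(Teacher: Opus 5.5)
Your proposal is correct and follows the paper's proof. It uses the same three ingredients: conjugation symmetry of the real-coefficient $g$, exact degree-seven Bernstein expansions with de Casteljau bisection on the base segments, and a sign certificate for the six rays. The only difference is the tail test, where the paper uses the cruder bound $U=c_7+\sum_{k<7}\max(c_k,0)<-\kappa$ together with $\lambda^k\le\lambda^7$ for $\lambda\ge1$. That criterion implies yours: after $\lambda=1+s$, each $s^j$-coefficient with $j\ge1$ is at most $\binom7j U$, and the constant one is at most $U+\kappa$. So your inversion fallback is never needed.
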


\begin{proof}
For a segment with endpoints $p,q$, express
\[
 R(t)=\Rea g(p+t(q-p))+\kappa=\sum_{k=0}^7 a_kt^k,
 \qquad 0\leq t\leq1,
\]
in the Bernstein basis of degree seven:
\begin{equation}\label{eq:bernstein}
 R(t)=\sum_{j=0}^7\beta_j\binom7j t^j(1-t)^{7-j},
 \qquad \beta_j=\sum_{k=0}^j a_k\frac{\binom jk}{\binom7k}.
\end{equation}
The basis functions are nonnegative and sum to one. Thus negative Bernstein coefficients certify strict negativity on the whole segment. If necessary, subdivide the parameter interval at its midpoint. The Bernstein coefficients on the two halves are obtained by repeatedly averaging adjacent coefficients; the left edge and reversed right edge of this de Casteljau triangle give the two new coefficient lists.

For the vertices in Table~\ref{tab:vertices}, rational arithmetic gives the terminal interval counts in Table~\ref{tab:cells}. All terminal Bernstein coefficients are strictly negative. The maximum subdivision depth is six. Appendix~\ref{app:checker} gives the entire checking program; the supplement records all forty-four parameter intervals and their negative upper bounds. In particular, the calculation certifies entire segments rather than sampled points.

For an endpoint ray write
\[
 \Rea g(\lambda p)=\sum_{k=0}^7c_k\lambda^k.
\]
For each of the six base rays the rational calculation gives
\begin{equation}\label{eq:ray}
 U=c_7+\sum_{k=0}^6\max(c_k,0)<-\kappa.
\end{equation}
For $\lambda\geq1$, discard the nonpositive lower-degree terms and bound each positive $\lambda^k$ by $\lambda^7$. This yields $\Rea g(\lambda p)\leq U\lambda^7<-\kappa$. The base graphs are therefore certified including both infinite tails. The conjugate graphs satisfy the same inequality because $g$ has real coefficients.
\end{proof}

\begin{table}[ht]
\caption{Numbers of terminal Bernstein intervals for successive base segments.}\label{tab:cells}
\centering
\begin{tabular}{ccc}
\toprule
Graph & terminal counts in vertex order & total\\
\midrule
$A_0$ & $6,1,2,3,1$ & 13\\
$B_+$ & $7,1,2,2,5,1$ & 18\\
$C_+$ & $5,1,1,1,5$ & 13\\
\bottomrule
\end{tabular}
\end{table}

\subsection{The signs at the zeros}
For each graph $S=f(T)$ define the continuous function
\begin{equation}\label{eq:J}
 J(z)=S\!\left(\frac{z}{r\eps}\right)
       -f\!\left(T\!\left(\frac{z}{r\eps}\right)\right).
\end{equation}
By Lemma~\ref{lem:graphcert} and \eqref{eq:forbidden}, its zero set lies in $\{|P|>1\}$. Hence $J$ has constant nonzero sign along every path in $\{|P|\leq1\}$.

Rational interval evaluation on the rectangles in Table~\ref{tab:roots} gives Table~\ref{tab:signs}. Here is a precise way to perform the evaluation without approximating the zeros. At $r\alpha_j$, the input to the graph function is $\alpha_j/\eps$. For each graph, the $T$-image of the entire scaled rectangle belongs to one affine branch $f(T)=mT+d$. The expression $S-mT-d$ is then affine on the rectangle. Its extremal bounds are obtained by choosing a lower or upper endpoint in each coordinate according to the sign of its coefficient. The resulting bounds are strictly of the indicated signs. This is also checked in Appendix~\ref{app:checker}.

\begin{table}[ht]
\caption{Signs of the five separating functions at the zeros of $P$.}\label{tab:signs}
\centering
\begin{tabular}{cccccc}
\toprule
Zero & $A_0$ & $B_+$ & $B_-$ & $C_+$ & $C_-$\\
\midrule
$r$ & $+$ & $-$ & $-$ & $-$ & $-$\\
$r\alpha_1$ & $-$ & $+$ & $-$ & $-$ & $-$\\
$r\overline{\alpha_1}$ & $-$ & $-$ & $+$ & $-$ & $-$\\
$r\alpha_2$ & $-$ & $-$ & $-$ & $-$ & $-$\\
$r\overline{\alpha_2}$ & $-$ & $-$ & $-$ & $-$ & $-$\\
$r\alpha_3$ & $-$ & $-$ & $-$ & $+$ & $-$\\
$r\overline{\alpha_3}$ & $-$ & $-$ & $-$ & $-$ & $+$\\
\bottomrule
\end{tabular}
\end{table}

\begin{proposition}\label{prop:pair}
If two distinct zeros of $P$ can be joined by a continuous path in $\{|P|\leq1\}$, they are $r\alpha_2$ and $r\overline{\alpha_2}$.
\end{proposition}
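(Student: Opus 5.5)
The argument is a sign-invariance comparison using the five functions $J$ from \eqref{eq:J} and Table~\ref{tab:signs}.

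First I record why each $J$ is an invariant of paths. Each graph is the image of the continuous map $T\mapsto$ (point with coordinates $(T,f(T))$) under the inverse of an invertible linear change of coordinates. Hence $J$ is continuous on $\C$, and $J(z)=0$ exactly when $z/(r\eps)$ lies on the corresponding complete graph. By Lemma~\ref{lem:graphcert}, every such point has $\Rea g<-\kappa$. Then \eqref{eq:forbidden} gives $|P(z)|>1$ there, so $J$ never vanishes on $\{|P|\le 1\}$.

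Now suppose $\gamma:[0,1]\to\{|P|\le1\}$ is continuous and joins two distinct zeros. Then $J\circ\gamma$ is a continuous real function on $[0,1]$ with no zeros. By the intermediate value theorem it has constant sign, so $J$ takes the same sign at both endpoints. This holds for each of the five graphs. The two endpoints must therefore have identical rows in Table~\ref{tab:signs}.

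It remains to inspect the table, taking its entries as already certified by the rational interval evaluation described before it.
\begin{itemize}
\item The zero $r$ is the only one with $+$ in column $A_0$.
\item $r\alpha_1$ is the only one with $+$ in column $B_+$.
\item $r\overline{\alpha_1}$ is the only one with $+$ in column $B_-$.
\item $r\alpha_3$ is the only one with $+$ in column $C_+$.
\item $r\overline{\alpha_3}$ is the only one with $+$ in column $C_-$.
\end{itemize}
So each of these five zeros has a row that differs from every other row. The only two rows that coincide are those of $r\alpha_2$ and $r\overline{\alpha_2}$, both of which are all minus signs. Hence any connected pair must be $\{r\alpha_2,r\overline{\alpha_2}\}$.

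There is no real obstacle at this stage. The substantive content lies in the already established Lemma~\ref{lem:graphcert} and in the certified sign table. The one point I would state carefully is that $J$ is continuous on the whole plane, including along the unbounded rays. This holds because $f$ is defined and continuous on all of $\R$ and the coordinate maps are affine.
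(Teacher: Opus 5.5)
Your proof is correct and takes the same route as the paper. You use the sign invariance of the five separator functions $J$ along paths in $\{|P|\le1\}$, which comes from Lemma~\ref{lem:graphcert}, \eqref{eq:forbidden} and the intermediate value theorem, and then read off the answer from Table~\ref{tab:signs}. Your remarks on the global continuity of $J$ and the column-by-column uniqueness of the $+$ entries simply make explicit what the paper's two-line proof leaves implicit.
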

\begin{proof}
Endpoints of an allowed path must have equal signs for all five functions \eqref{eq:J}, by the intermediate value theorem. Only the two indicated rows of Table~\ref{tab:signs} agree.
\end{proof}
No assertion that the remaining pair can actually be joined is required for the counterexample.

\section{A quantitative obstruction for the remaining pair}\label{sec:length}
\subsection{Forbidden points in the \texorpdfstring{$Q$}{Q}-plane}
Dividing an allowed path for $P$ by $r$ gives a path in $\{|Q|\leq r^{-7}\}$. For $t>0$, every nonconstant term of $Q(-t)$ is negative. Hence
\[
 |Q(-t)|\geq1+\eps^6t.
\]
The rational inequality $r^{-7}-1<8\delta$ shows that any allowed negative real point $x$ satisfies
\begin{equation}\label{eq:D}
 |x|<D:=8\delta/\eps^6=8\cdot10^{-12}.
\end{equation}

Let $h=\eps/100=10^{-5}$ and set
\begin{equation}\label{eq:shortsegments}
 S_\pm=\{x\pm ih:-3h\leq x\leq-h/10\}.
\end{equation}
\begin{lemma}\label{lem:short}
For every $z\in S_+\cup S_-$, one has $|Q(z)|>r^{-7}$.
\end{lemma}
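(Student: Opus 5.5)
The plan is to bound $|Q(z)|$ below by the modulus of its real part, as in \eqref{eq:forbidden}, and to show that $-\Rea Q(z)$ exceeds $1+8\delta$. We already know from the previous paragraph that $r^{-7}<1+8\delta$. By conjugation symmetry ($Q$ has real coefficients) it suffices to treat $z=x+ih\in S_+$ with $-3h\le x\le -h/10$. Then $|z|\le\sqrt{10}\,h<4h$.

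Expanding \eqref{eq:Q}, we have
\[
\Rea Q(z)=-1+\eps^6x+b\eps^5(x^2-h^2)+a\eps^4\,x(x^2-3h^2)+E,
\]
where $E$ collects the real parts of $z^7$, $-\eps^6 z^6$, $-b\eps^5 z^5$ and $-a\eps^4 z^4$. The leading useful term is $\eps^6 x\le -\eps^6h/10=-10^{-24}$. This is the source of the margin, and it is exactly why the segments stop at $x=-h/10$ rather than reaching the imaginary axis.

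Next I bound the three competing terms crudely, using only $|x|\le 3h$ and $|z|\le 4h$:
\[
\bigl|b\eps^5(x^2-h^2)\bigr|\le |b|\eps^5h^2\max(1,8),
\]
but the sign is favorable when $x^2>h^2$. Since $b<0$, the term is positive only when $x^2<h^2$, where it is at most $\tfrac{55}{16}\eps^5h^2\approx3.44\cdot10^{-25}$. For the cubic term, $|x(x^2-3h^2)|\le 3h\cdot 6h^2=18h^3$, so it is at most $18a\eps^4h^3\approx 5\cdot10^{-26}$. For the tail, $|E|\le (4h)^7+\eps^6(4h)^6+|b|\eps^5(4h)^5+a\eps^4(4h)^4$, and each of these is below $10^{-30}$. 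All of these estimates are exact rational comparisons, so they can be checked mechanically alongside Appendix~\ref{app:checker}.

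Combining, I get $\Rea Q(z)\le -1-10^{-24}+3.5\cdot10^{-25}+5\cdot10^{-26}+4\cdot10^{-30}<-1-5\cdot10^{-25}$. Hence $|Q(z)|\ge-\Rea Q(z)>1+5\cdot10^{-25}>1+8\delta>r^{-7}$.

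The only delicate step is the quadratic term near $x=-h/10$, where $b\eps^5(x^2-h^2)$ works against us. Its size $\approx 0.34\,\eps^6 h/10\cdot 10^{0}$ must be compared honestly with $\eps^6 h/10$, which works because $|b|\eps^5h^2/(\eps^6h/10)=10|b|h/\eps=0.34<1$. If a sharper bookkeeping is needed, I would split at $|x|=h$ and use the favorable sign of the quadratic term for $|x|\ge h$.
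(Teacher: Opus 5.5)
Your proof is correct and is essentially the paper's argument. Both bound $|Q|$ below by $-\Rea Q$, expand the terms of degree at most three exactly using the signs of $x$ and $b$, and bound the degree $4$--$7$ terms by moduli, reaching the same threshold $1+5\cdot10^{-25}=1+\eps^7\kappa$; the paper merely works in the rescaled variable $w=z/\eps$, where your $4h$ is $\eps W$. One harmless slip: $a\eps^4(4h)^4\approx 7\cdot10^{-30}$ is not below $10^{-30}$, so your tail bound is about $7.2\cdot10^{-30}$ rather than $4\cdot10^{-30}$, which is still negligible against the $10^{-25}$-scale margin.
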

\begin{proof}
Write $z=\eps(X+i/100)$ on $S_+$, so that $-3/100\leq X\leq-1/1000$. Put $W=1/25$; then $|X+i/100|\leq W$. Expanding the low-degree terms of \eqref{eq:g} and bounding the remaining terms by their moduli gives
\begin{align}\label{eq:horizontalbound}
 \Rea g(X+i/100)\leq{}&W^7+\eps^5W^6+(-b)\eps^3W^5+a\eps W^4\notag\\
 &-\frac1{1000}\left(1-\frac{3a}{10000}\right)
       +\frac{-b}{10000}<-\frac1{2000}.
\end{align}
Indeed, $a\Rea(X+i/100)^3=aX^3-3aX/10000$ and $b\Rea(X+i/100)^2=bX^2-b/10000$. The terms $aX^3$ and $bX^2$ are nonpositive, and $1-3a/10000>0$. The last comparison in \eqref{eq:horizontalbound} is a rational inequality. Equivalently, one Bernstein coefficient list certifies the whole segment in the supplied checker. Equations~\eqref{eq:scaling} and \eqref{eq:margin} prove the required bound. Conjugation gives the assertion for $S_-$.
\end{proof}

\subsection{Localization of a real crossing}
Suppose that an allowed rectifiable path $\gamma$ joins $r\alpha_2$ to $r\overline{\alpha_2}$ and has $\ell(\gamma)\leq2+\eta$. Write $\alpha=\alpha_2=u+iv$ and $\Gamma=\gamma/r$. Choose a parameter at which $\Gamma$ is real and denote its value by $x_0$. The two portions of $\Gamma$ have total length at least $2|\alpha-x_0|$. Consequently
\begin{equation}\label{eq:budget}
 |\alpha-x_0|\leq M:=\frac{1+\eta/2}{r}.
\end{equation}
If $x_0\geq0$, then \eqref{eq:leftbox} implies
\[
 1+x_0^2-2ux_0\leq M^2,
 \qquad x_0\leq\frac52(M^2-1)<3\eta.
\]
If $x_0<0$, equation~\eqref{eq:D} gives the same bound. Thus
\begin{equation}\label{eq:crossing}
 |x_0|<3\eta.
\end{equation}
The strict rational comparisons used here are verified in the checker.

\subsection{The forced detour}
Before reaching $x_0$, the path crosses the line $\Im z=h$ at some point $q=x+ih$. Lemma~\ref{lem:short} gives $x<-3h$ or $x>-h/10$. Put $k=x/h$. Since $|\alpha|=1$, projection onto $\alpha$ and the reverse triangle inequality yield
\begin{align}\label{eq:chords}
 |\alpha-q|+|q-x_0|
 &\geq 1-ux-vh+|q|-|x_0|\notag\\
 &=1+h\left(\sqrt{1+k^2}-uk-v\right)-|x_0|.
\end{align}
For completeness, the first term follows from
$|\alpha-q|\geq\Rea((\alpha-q)\overline\alpha)=1-ux-vh$.

Define $\phi(k)=\sqrt{1+k^2}-uk-v$. This is strictly convex and has its minimum at $k=u/v\in(-3,-1/10)$, by \eqref{eq:leftbox}. Hence, outside that interval,
\[
 \phi(k)\geq\min\{\phi(-3),\phi(-1/10)\}.
\]
Both endpoint values exceed $1/250$. To see this with rational comparisons, the function $u/10-\sqrt{1-u^2}$ is decreasing for $u\leq-1/5$ in the range at issue. Therefore
\begin{align*}
 \phi(-1/10)
 &\geq\frac{\sqrt{101}}{10}-\frac1{50}-\frac{\sqrt{24}}5\\
 &>\frac{251}{250}-\frac1{50}-\frac{49}{50}=\frac1{250}.
\end{align*}
Similarly, $-3\sqrt{1-v^2}-v$ is increasing for $v\geq2/5$, so
\begin{align*}
 \phi(-3)&\geq\sqrt{10}-3\sqrt{21/25}-\frac25\\
 &>\frac{1581}{500}-\frac{2751}{1000}-\frac25
   =\frac{11}{1000}>\frac1{250}.
\end{align*}
The four square-root comparisons follow by squaring positive rational numbers.

After $x_0$, the path crosses $\Im z=-h$. Complex conjugation gives the identical estimate for the lower portion. Applying the polygonal lower bound for variation to these ordered crossing points and using \eqref{eq:chords} gives
\begin{align*}
 \ell(\gamma)
 &>2r\left(1+\frac{h}{250}-|x_0|\right)\\
 &>2r\left(1+\frac{h}{250}-3\eta\right)
   =2r(1+\eta)>2+\eta,
\end{align*}
because $h/250=4\eta$. This contradicts the assumed length bound. No simplicity assumption on the path has been used: the selected upper crossing, real crossing, and lower crossing occur in that order.

\begin{proposition}\label{prop:variation}
Every continuous path joining distinct zeros of $P$ in $\{|P|\leq1\}$ has total variation greater than $2+\eta$.
\end{proposition}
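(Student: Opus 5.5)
The plan is to combine Proposition~\ref{prop:pair} with the detour estimate of Section~\ref{sec:length}, and to dispose of non-rectifiable paths trivially. Let $\gamma$ be a continuous path in $\{|P|\leq1\}$ joining distinct zeros. If $\ell(\gamma)=+\infty$, the inequality $\ell(\gamma)>2+\eta$ holds at once. So I assume $\gamma$ is rectifiable. By Proposition~\ref{prop:pair}, its endpoints must be $r\alpha_2$ and $r\overline{\alpha_2}$; after reversing the parametrization, which does not change total variation, $\gamma$ starts at $r\alpha_2$. All other pairs are excluded outright, because no continuous path in the closed sublevel set joins them.

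For the remaining pair I argue by contradiction, exactly as in the subsections above. Assume $\ell(\gamma)\leq2+\eta$ and set $\Gamma=\gamma/r$. The endpoints $\alpha$ and $\overline\alpha$ lie in opposite open half-planes, so $\Gamma$ meets the real axis at some parameter $t_0$; call the point $x_0$. The budget \eqref{eq:budget}, together with \eqref{eq:leftbox} for $x_0\ge0$ and \eqref{eq:D} for $x_0<0$, gives \eqref{eq:crossing}. I must also check that the crossings used in the detour argument exist and are correctly ordered:
\begin{itemize}
\item since $v>2/5>h$, the intermediate value theorem applied to $\Im\Gamma$ on $[0,t_0]$ yields a parameter $t_+<t_0$ with $\Im\Gamma(t_+)=h$;
\item applied on $[t_0,1]$, it yields $t_->t_0$ with $\Im\Gamma(t_-)=-h$.
\end{itemize}
Lemma~\ref{lem:short} then places these two crossing points outside $S_\pm$, so $k=x/h\notin[-3,-1/10]$.

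Total variation dominates the length of the inscribed polygon through $\alpha,\Gamma(t_+),x_0,\Gamma(t_-),\overline\alpha$, taken in parameter order. Using \eqref{eq:chords} and its conjugate, together with the convexity bound $\phi(k)>1/250$ outside $[-3,-1/10]$, I obtain
\[
\ell(\gamma)>2r\left(1+\frac{h}{250}-3\eta\right)=2r(1+\eta).
\]
It remains to verify the rational inequality $2r(1+\eta)>2+\eta$. This reduces to $\eta>2\delta(1+\eta)$, which is immediate for $\delta=10^{-30}$. The resulting bound contradicts the assumption $\ell(\gamma)\le 2+\eta$, so in fact $\ell(\gamma)>2+\eta$.

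The main obstacle is bookkeeping rather than new estimates. One point is the strictness in \eqref{eq:chords}: if the upper crossing landed exactly at a point of $S_+$, that point would be forbidden by Lemma~\ref{lem:short}, so the strict exclusion of the closed segment is what is needed. The other is that $\phi$ is minimized inside $(-3,-1/10)$, which requires checking $u/v\in(-3,-1/10)$ from Table~\ref{tab:roots}. For $j=2$ this is clear: $u/v\in(-0.27,-0.2)$. Note that the argument never assumes $\gamma$ is injective. Only the parameter ordering of the three chosen points enters, and the polygonal lower bound for variation holds for arbitrary continuous paths.
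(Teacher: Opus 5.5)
Your proposal is correct and follows the paper's proof essentially verbatim. Proposition~\ref{prop:pair} reduces everything to the conjugate pair $r\alpha_2,r\overline{\alpha_2}$, nonrectifiable paths are disposed of trivially, and the real-crossing localization combined with the forced detour through the lines $\Im z=\pm h$ gives $\ell(\gamma)>2r(1+\eta)>2+\eta$. Your extra bookkeeping is accurate and makes explicit what the paper leaves implicit: the existence and ordering $t_+<t_0<t_-$ of the crossings, the check $u/v\in(-3,-1/10)$ for $j=2$, and the reduction of the final comparison to $\eta>2\delta(1+\eta)$.
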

\begin{proof}
Proposition~\ref{prop:pair} excludes all but the remaining conjugate pair. The preceding argument excludes every rectifiable path of length at most $2+\eta$ for that pair. Nonrectifiable paths have infinite variation.
\end{proof}

\section{From variation to Hausdorff length}\label{sec:hausdorff}
\subsection{The continuum argument}
A lower bound for the variation of a given parametrization does not by itself imply the same bound for the measure of its image. A bound that excludes \emph{every} rectifiable connector is different: it transfers by the general finite-length continuum fact below. We include its proof to make the argument self-contained and make no novelty claim for the lemma.

\begin{lemma}\label{lem:continuum}
Let $K\subset\C$ be compact and connected with $L=\HH(K)<\infty$. For any $a,b\in K$, there is a rectifiable path from $a$ to $b$ contained in $K$ with length at most $L$.
\end{lemma}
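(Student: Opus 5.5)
Our plan is the standard argument that a continuum of finite $\HH$-measure is arcwise connected by rectifiable arcs, with length controlled by $L$. We use two classical facts. First, if $K$ is a continuum, then $\HH(K)\geq\operatorname{diam}K$. This follows by projecting onto the line through two points realizing the diameter: projection is $1$-Lipschitz, and the projected image is a connected set containing an interval of length $\operatorname{diam}K$. Second, a compact connected metric space is \emph{well chained}. For every $\rho>0$ there are points $a=x_0,x_1,\dots,x_n=b$ of $K$ with $|x_{i}-x_{i-1}|<\rho$.

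\textbf{Step 1: polygonal chains of bounded length.} For fixed $\rho$, take a $\rho$-chain from $a$ to $b$ in $K$ and prune it so that no $x_j$ lies within $\rho$ of $x_i$ for $j>i+1$. The aim is to show that the chain length $\sum|x_i-x_{i-1}|$ is at most $L+O(\rho)$. We would bound this with the first fact applied locally. The balls $B(x_i,\rho/2)$ for indices $i$ of a fixed parity are pairwise disjoint, by pruning. Each connected component of $K\cap \overline{B}(x_i,\rho/2)$ containing $x_i$ reaches the sphere $\partial B(x_i,\rho/2)$, by the boundary-bumping theorem (unless $K$ is contained in the ball). Hence it has $\HH$-measure at least $\rho/2$. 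Summing over one parity class gives $(n/2)(\rho/2)\lesssim L$, which bounds only the number of steps. That is not sharp enough for length $\le L$.

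We would therefore instead build a sharp version by a Go\l\k{a}b-type limit argument. Pass to a limit of the chains in the Hausdorff metric to get a continuum $K'\subset K$ containing $a,b$. Then choose a \emph{minimal} subcontinuum $K_0\subset K$ containing $a$ and $b$, which exists by Zorn's lemma with the compactness of nested intersections. Its measure satisfies $\HH(K_0)\le L$.

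\textbf{Step 2: the minimal continuum is an arc.} An irreducible continuum between $a$ and $b$ with finite $\HH$-measure is an arc. Finite measure implies local connectedness (a continuum of finite $\HH$ measure is a Peano continuum: failure of local connectedness produces infinitely many disjoint subcontinua of diameter bounded below, contradicting finiteness via the first fact). A Peano continuum is arcwise connected, and irreducibility then forces $K_0$ to be an arc. Parametrize it injectively as $\gamma:[0,1]\to K_0$.

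\textbf{Step 3: length equals measure for an injective arc.} For a partition $0=t_0<\dots<t_m=1$, the subarcs $\gamma([t_{i-1},t_i])$ overlap only at endpoints. Each has $\HH$ measure at least $|\gamma(t_i)-\gamma(t_{i-1})|$ by the first fact. Summing gives $\ell(\gamma)\le\HH(K_0)\le L$. So $\gamma$ is rectifiable, lies in $K$, and joins $a$ to $b$.

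We expect the main obstacle to be Step 2, specifically the local connectedness of finite-measure continua. Our first attempt would be the argument by contradiction above: non-local-connectedness at a point gives a convergence continuum, hence infinitely many disjoint subcontinua of diameter at least some $c>0$, each of $\HH$-measure at least $c$, contradicting $L<\infty$. With that in hand, Steps 1--3 could alternatively be streamlined by skipping the chain construction entirely.
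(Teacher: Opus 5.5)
Your proof is correct, but it takes a different route from the paper.

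\textbf{The paper's route.} The paper never passes through arcs or local connectedness. It takes a near-optimal cover of $K$ by sets of diameter $<1/m$ whose diameter sum is $<L+1/m$. Connectedness of $K$ makes the overlap graph connected, so there is a chain of cover sets from $a$ to $b$. Joining chosen points of $K$ in successive overlaps gives a polygonal path of length $<L+2/m$ lying in the $2/m$-neighbourhood of $K$. Arzel\`a--Ascoli and lower semicontinuity of chord sums then give the required path inside $K$.

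\textbf{What your Step 1 was missing.} This is the sharp form of your Step 1. The chain should be drawn from a near-optimal $\HH$ cover, so that its length is bounded by the diameter sum rather than by a count of steps. With that, no Go\l\k{a}b-type limit is needed.

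\textbf{Your route.} Your Steps 2--3 go through the structure theory of continua instead:
\begin{enumerate}
\item take an irreducible subcontinuum $K_0$ between $a$ and $b$;
\item show $K_0$ is locally connected because $\HH(K_0)<\infty$, using convergence continua or, more elementarily, boundary bumping in small balls to get infinitely many disjoint subcontinua of diameter bounded below;
\item apply the arc theorem for Peano continua;
\item bound $\ell(\gamma)\le\HH(K_0)$ using $\HH(C)\ge\operatorname{diam}C$ on nonoverlapping subarcs.
\end{enumerate}

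\textbf{Comparison.} Your argument is valid and yields more: an injective arc in $K$, whose length in fact equals $\HH$ of its image. The cost is importing several nontrivial classical theorems: existence of irreducible continua, the convergence-continuum or local-connectedness criterion, and the Moore--Menger arc theorem. The paper's argument uses only the definition of $\HH$, connectedness of the overlap graph, and Arzel\`a--Ascoli.

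\textbf{Minor points.} Your chain construction and the Hausdorff-limit continuum $K'$ are never used and can be deleted. The degenerate case $a=b$, where $K_0$ is a point, should be handled by a constant path.
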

\begin{proof}
If $a=b$, the constant path suffices. For each positive integer $m$, the definition of Hausdorff measure gives a countable cover of $K$ by sets of diameter less than $1/m$ whose diameter sum is less than $L+1/m$. Enlarge them to open sets, with total increase of diameters less than $1/m$ and with each resulting diameter less than $2/m$. Compactness gives a finite subcover. Discard members not meeting $K$; denote the remaining sets by $U_1,\ldots,U_N$.

Connect indices $i,j$ when $U_i\cap U_j\cap K\ne\varnothing$. This finite graph is connected. Otherwise the unions over a partition into graph components would give two disjoint nonempty relatively open sets covering $K$, contradicting connectedness. Choose a simple chain from a set containing $a$ to a set containing $b$, and choose a point of $K$ in every successive intersection. Joining these points, together with $a,b$, by line segments gives a polygonal path of length at most the sum of the diameters of the sets in the chain, hence less than $L+2/m$.

Each line segment has endpoints in $K$ and length less than $2/m$, so the polygonal path lies in the $2/m$-neighborhood of $K$. Parametrize it at constant speed on $[0,1]$. These paths have a common Lipschitz bound $L+2$ and images in a common compact set. The Arzel\`a--Ascoli theorem provides a uniformly convergent subsequence. Its limit has endpoints $a,b$ and image in $K$. For every finite partition of $[0,1]$, the corresponding chord sum converges and is at most $L$ in the limit. Taking the supremum over partitions proves that the limiting path has length at most $L$.
\end{proof}

\begin{proof}[Proof of Theorem~\ref{thm:main}]
Section~\ref{sec:roots} proves the assertions about the polynomial and its zeros. Let $K=\gamma([0,1])$ for an allowed continuous path joining distinct zeros. If $\HH(K)\leq2+\eta$, then $K$ is a compact connected set of finite length. Lemma~\ref{lem:continuum} supplies a rectifiable path in $K$ joining the same endpoints with length at most $2+\eta$. Since $K\subset\{|P|\leq1\}$, this contradicts Proposition~\ref{prop:variation}. Thus \eqref{eq:main} holds. The total-variation assertion also follows directly from that proposition.
\end{proof}

\subsection{General consequences for earlier counterexamples}
The following proposition isolates the transfers mentioned in the introduction. They apply to any polynomial satisfying the hypotheses, independently of our particular coefficients or separator construction.

\begin{proposition}[Hausdorff and dilation transfers]\label{prop:transfers}
Let $f$ be monic of degree $n\geq2$, with all zeros in $\{|z|\leq\rho\}$ for some $0<\rho<1$. Let $L>0$. Suppose that every rectifiable path joining distinct zeros of $f$ in $U_f=\{|f|<1\}$ has length greater than $L$. Then:
\begin{enumerate}
\item Every continuous path joining distinct zeros of $f$ in $U_f$ has image of Hausdorff measure greater than $L$.
\item For $1<s<\rho^{-1}$, put $f_s(z)=s^nf(z/s)$. This polynomial is monic of degree $n$, all its zeros lie in the open unit disk, and every continuous path joining distinct zeros of $f_s$ in $\{|f_s|\leq1\}$ has image of Hausdorff measure greater than $sL$.
\end{enumerate}
If $f\in\Q[z]$ and $s\in\Q$, then $f_s\in\Q[z]$.
\end{proposition}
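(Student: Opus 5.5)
The plan is to obtain part (1) directly from Lemma~\ref{lem:continuum}, exactly as in the proof of Theorem~\ref{thm:main}, and then derive part (2) from part (1) by rescaling. Rationality is a coefficient check.

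For (1), let $\gamma:[0,1]\to U_f$ be continuous and join distinct zeros $z_1\neq z_2$ of $f$. Put $K=\gamma([0,1])$. It is compact and connected as a continuous image of $[0,1]$, and $K\subset U_f$. If $\HH(K)=\infty$, there is nothing to prove. Otherwise, assume for contradiction that $\HH(K)\leq L$. Lemma~\ref{lem:continuum}, applied with $a=z_1$ and $b=z_2$, gives a rectifiable path in $K$, hence in $U_f$, joining these zeros with length at most $\HH(K)\leq L$. This contradicts the hypothesis, so $\HH(K)>L$.

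For (2), write $f(z)=\sum_{k=0}^n c_kz^k$ with $c_n=1$. Then
\[
 f_s(z)=\sum_{k=0}^n s^{\,n-k}c_kz^k .
\]
This polynomial is monic of degree $n$. It has rational coefficients whenever $f\in\Q[z]$ and $s\in\Q$, which settles the final sentence. Its zeros are exactly $s\zeta$ for the zeros $\zeta$ of $f$, with the same multiplicities. They therefore have modulus at most $s\rho<1$, and distinct zeros of $f_s$ correspond to distinct zeros of $f$.

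Now let $\gamma$ be a continuous path in $\{|f_s|\leq1\}$ joining distinct zeros of $f_s$, and set $\Gamma=\gamma/s$. Then $|f(\Gamma(t))|=s^{-n}|f_s(\gamma(t))|\leq s^{-n}<1$, because $s>1$ and $n\geq2$. So $\Gamma$ is a continuous path in the open set $U_f$ joining distinct zeros of $f$. The closed sublevel condition has become strict, which is the point of the dilation. Part (1) gives $\HH(\Gamma([0,1]))>L$. Since $\HH$ is homogeneous of degree one under dilations, $\HH(\gamma([0,1]))=s\,\HH(\Gamma([0,1]))>sL$. This homogeneity follows because $z\mapsto sz$ multiplies the diameters of all covering sets by $s$.

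The only step needing care is the strict inequality $s^{-n}<1$, which places the rescaled path in the open set $U_f$ so that the hypothesis applies. I would also note that $s<\rho^{-1}$ is used only to place the zeros of $f_s$ in the open unit disk. The rest of the argument is routine.
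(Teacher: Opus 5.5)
Your proposal is correct and follows the paper's proof essentially step for step: part (1) is proved by contradiction using Lemma~\ref{lem:continuum}, and part (2) rescales to $\Gamma=\gamma/s$, so that $|f\circ\Gamma|\le s^{-n}<1$ places the path in $U_f$, and then applies the dilation law for $\HH$. Rationality follows, as in the paper, from the coefficient factors $s^{n-k}$; the inequality $s^{-n}<1$ needs only $s>1$ and $n\ge1$, not $n\ge2$.
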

\begin{proof}
For the first assertion, suppose that a continuous connector has image $K$ with $\HH(K)\leq L$. This image is compact and connected. Lemma~\ref{lem:continuum} supplies a rectifiable connector in $K\subset U_f$ with length at most $L$, a contradiction.

For the second assertion, monicity follows from the factor $s^n$, and the zeros are the zeros of $f$ multiplied by $s$. Their moduli are at most $s\rho<1$. If $\gamma$ is an allowed connector for $f_s$, then $\widetilde\gamma=\gamma/s$ joins distinct zeros of $f$ and satisfies
\[
 |f(\widetilde\gamma(t))|=s^{-n}|f_s(\gamma(t))|
 \leq s^{-n}<1.
\]
The first assertion and the scaling law for Hausdorff measure give
\[
 \HH(\gamma([0,1]))
   =s\,\HH(\widetilde\gamma([0,1]))>sL.
\]
Finally, the coefficient of $z^k$ in $f_s$ is $s^{n-k}$ times that in $f$, proving the rationality assertion.
\end{proof}

\begin{remark}\label{rem:contribution}
Taking $L=2$ shows why neither a Hausdorff formulation nor the existence of some positive closed-sublevel margin, by itself, constitutes a new counterexample mechanism. The hypothesis with a strict lower bound is the form reported for ani's instance in~\cite{Cook,AniFormal}. If one assumes only that every rectifiable connector has length at least $2$, the same argument gives Hausdorff measure at least $2s>2$ after dilation. Thus even that weaker hypothesis suffices for a closed-sublevel obstruction above $2$.

These general transfers do not verify the specific polynomial \eqref{eq:P}, the five graphs of Table~\ref{tab:vertices}, or the margin $\eta=10^{-8}$ for that polynomial. Those are the explicit construction and certificate results established in Sections~\ref{sec:roots}--\ref{sec:length}. We make no claim that this margin is optimal, that the general transfers are new, or that coefficient differences alone establish an essentially new family.
\end{remark}

\section{Exact certificates and formal verification}\label{sec:certificates}
\subsection{Rational reproducibility}
The computational part consists only of finite algebraic identities and strict rational inequalities. The program in Appendix~\ref{app:checker} uses Python's arbitrary-precision \texttt{Fraction} arithmetic. It checks the polynomial scaling identity, the cubic factorization and root enclosures, all sixteen base segments and six endpoint rays, the complete root sign table, the two horizontal obstacles by conjugation, and the final numerical margins. It uses no floating-point arithmetic, numerical root solver, critical-point enumeration, or analytic continuation.

The supplementary file \texttt{graph\_fences.json} encodes precisely Table~\ref{tab:vertices}. Running
\begin{quote}\ttfamily\footnotesize
python3 verify\_graph\_separation.py graph\_fences.json graph\_certificate.json
\end{quote}
reconstructs the complete certificate. Each terminal interval has rational endpoints and a strictly negative rational upper Bernstein coefficient. The program also verifies that the terminal intervals cover the original parameter interval without gaps. The forty-four intervals counted in Table~\ref{tab:cells} refer only to the base graphs; the horizontal segment has a separate one-interval certificate. Assertions must be enabled; the program rejects execution with Python's \texttt{-O} option.

The accompanying archive also preserves an earlier critical-point certificate as supplementary provenance. It is not needed for the graph-separator proof in this paper. Both arithmetic checkers were rerun on 23 September 2026, and their generated JSON data agree with the stored certificates.

\subsection{Lean 4 verification and its precise scope}
The accompanying development uses Lean~4~\cite{Lean4} and mathlib~\cite{Mathlib4}. The three previously missing formal assertions have now been proved: the full arbitrary-image, closed-sublevel conclusion of Theorem~\ref{thm:main}, the compact-continuum connector lemma (Lemma~\ref{lem:continuum}), and the general transfers in Proposition~\ref{prop:transfers}. The verification run completed on 23 September 2026 comprises a project build, an axiom audit of all 314 named theorems, explicit manuscript and historical-statement adapters, and sequential kernel replay of all 28 project proof modules. Commands, exit codes, UTC timestamps, dependency revisions and verified source hashes are recorded in the accompanying supplement~\cite{VerificationSupplement}. The exact rational graph certificate was regenerated and agrees with the archived data.

The environment is Lean 4.33.1, compiler commit
\begin{center}\ttfamily\small
819816b2e0a3bf405af45ae5c7af2491d8f5bee6,
\end{center}
with mathlib commit
\begin{center}\ttfamily\small
0df444a360eaa60ab8c11dca51a86af692955474.
\end{center}
Every transitive dependency is pinned in \texttt{lake-manifest.json}. The audit reports only \texttt{propext}, \texttt{Classical.choice}, and \texttt{Quot.sound}; no admitted proof axiom occurs. Kernel replay checks each project module from its imported environment. It is not a replay of the entire mathlib dependency tree or an independent implementation of Lean's kernel.

The principal declarations are:
\begin{enumerate}
\item \texttt{JSP864.\allowbreak manuscript\_\allowbreak main\_\allowbreak theorem}: monicity, degree seven, simple roots, open-unit-disk root bounds, and the full inequality $\HH(\gamma([0,1]))>2+\eta$ for arbitrary continuous root-joining paths in the closed unit sublevel set. The path is not assumed injective or rectifiable.
\item {\small\texttt{JSP864.\allowbreak compact\_\allowbreak connected\_\allowbreak rectifiable\_\allowbreak connector}}: for a compact preconnected set of finite $\HH$ and any two points in it, a path in the set exists with total variation at most its $\HH$. No path-connectedness or local-connectedness hypothesis is added.
\item \texttt{JSP864.\allowbreak hausdorff\_\allowbreak dilation\_\allowbreak transfers}: both conclusions of Proposition~\ref{prop:transfers}, including monicity, degree and the unit-disk root bounds of the dilated polynomial. The coefficient identity and rationality are checked by \texttt{JSP864.\allowbreak dilation\_\allowbreak eval} and \texttt{JSP864.\allowbreak rational\_\allowbreak dilation\_\allowbreak map}. The dilation is represented by mathlib's \texttt{Polynomial.scaleRoots}.
\end{enumerate}
The formal connector proof first extends the capped intrinsic-distance argument to any nonnegative length threshold. It then constructs short paths in shrinking open neighborhoods of the compact set, reparametrizes them with controlled Lipschitz constants, and uses compactness of a nested family of candidates in the product topology. The limiting connector lies in the original set and has the required length bound. This proves the same lemma as the polygonal and Arzel\`a--Ascoli argument in Section~\ref{sec:hausdorff}; it does not assume that lemma as an extra axiom.

The file \texttt{ManuscriptCheck.lean} explicitly checks the manuscript statements, including the positive real threshold in Proposition~\ref{prop:transfers}. The older open-sublevel declarations \texttt{JSP864.\allowbreak original\_\allowbreak counterexample} and \texttt{JSP864.\allowbreak not\_\allowbreak erdos\_\allowbreak 1041} remain available. These concern our polynomial and are distinct from Cook's formalization of ani's example~\cite{AniFormal}. The separate historical-statement adapter reproduces the root-multiset and open-sublevel formulation in the Formal Conjectures catalogue~\cite{FormalConjectures}, which is not a prize adjudication authority.

The verification claims concern the listed mathematical statements and their dependencies. They do not mean that every sentence, literature comparison or priority statement in the article is a Lean theorem. Earlier records of partial formalization are preserved separately and are superseded, for these three assertions, by the current sources and successful audit.

\subsection{Availability and preparation}
The Lean development and its verification evidence are archived at
\begin{quote}\small
\url{https://github.com/zx967/JSP-000864-Lean4}.
\end{quote}
The fixed source snapshot cited in this article is commit
\begin{center}\ttfamily\small
e8323addf7e11cb3255bee8af6bd1f9e08e552a3.
\end{center}
Reference~\cite{VerificationSupplement} links to that snapshot, so later changes to the default branch do not change the cited proof sources. The repository is private at the time of preparation. The authors plan to make it public after this article is announced on arXiv; until then, repository access is restricted to authorized users. Thus the repository links identify the archive but are not yet publicly accessible. No arXiv identifier has yet been assigned in this manuscript.

The stored verification report records the authors' completed run; it is not a claim that a GitHub Actions run has succeeded or that an independent third party has reproduced the verification. The repository includes a workflow and the command \texttt{python3 scripts/verify\_release.py} for repeating the checks with the pinned dependencies.

The submission package contains the manuscript source, this PDF, both arithmetic checkers and their input/output data, and the Lean source package with its build instructions, fresh verification logs, and a separate copy of the earlier archived records. No external web service is required to run the rational checker. The appendix and the tables specify the finite computations needed for the proof.

Generative AI (OpenAI ChatGPT/Codex) assisted in the mathematical development, code preparation, verification workflow, and writing of this manuscript. No independent external referee review is represented by the computational or formal verification records.

\appendix
\section{Complete rational checker}\label{app:checker}
The following program is reproduced without its introductory documentation string; its executable content is unchanged. The JSON input is the exact rational encoding of Table~\ref{tab:vertices} included in the supplement. Internally, the names \texttt{root\_1}, \texttt{root\_upper\_right}, and \texttt{root\_upper\_left} denote $A_0,B_+,C_+$, respectively. The list order of the root boxes is $1,\alpha_1,\overline{\alpha_1},\alpha_2,\overline{\alpha_2},\alpha_3,\overline{\alpha_3}$.
\begin{lstlisting}
from fractions import Fraction as F
from math import comb
from pathlib import Path
import json,sys

if not __debug__:
    raise SystemExit('Run without -O.')
e=F(1,1000);a=F(11,4);b=-F(55,16);delta=F(1,10**30);r=1-delta
g=[F(0),F(1),b,a,-a*e,-b*e**3,-e**5,F(1)]
Q=[F(-1),e**6,b*e**5,a*e**4,-a*e**4,-b*e**5,-e**6,F(1)]
kappa=F(1,2000)

def rational(x):
    assert isinstance(x,(int,str)) and not isinstance(x,bool),'Use integer or rational-string coordinates.'
    return F(x)

# Real polynomial arithmetic, with coefficients in increasing degree.
def add(p,q):
    z=[F(0)]*max(len(p),len(q))
    for i,x in enumerate(p):z[i]+=x
    for i,x in enumerate(q):z[i]+=x
    return z
def mul(p,q):
    z=[F(0)]*(len(p)+len(q)-1)
    for i,x in enumerate(p):
        for j,y in enumerate(q):z[i+j]+=x*y
    return z
def power(p,n):
    z=[F(1)]
    for _ in range(n):z=mul(z,p)
    return z
def evaluate(p,t):
    ans=F(0)
    for x in p[::-1]:ans=ans*t+x
    return ans
def real_on_line(p,q):
    x=[p[0],q[0]-p[0]];y=[p[1],q[1]-p[1]]
    ans=[F(0)]*8
    for n,c in enumerate(g):
        for j in range(0,n+1,2):
            term=mul(power(x,n-j),power(y,j))
            scale=c*comb(n,j)*(-1)**(j//2)
            ans=add(ans,[scale*t for t in term])
    return ans
def bernstein(p):
    n=len(p)-1
    return [sum(p[k]*F(comb(j,k),comb(n,k)) for k in range(j+1)) for j in range(n+1)]
def split(B):
    left=[B[0]];right=[B[-1]];row=B
    while len(row)>1:
        row=[(x+y)/2 for x,y in zip(row,row[1:])]
        left.append(row[0]);right.append(row[-1])
    return left,list(reversed(right))
def certify_segment(p,q):
    coeff=real_on_line(p,q);coeff[0]+=kappa
    stack=[(bernstein(coeff),0,F(0),F(1))];leaves=0;max_depth=0;cells=[]
    while stack:
        row,d,lo,hi=stack.pop()
        if max(row)<0:
            leaves+=1;max_depth=max(max_depth,d)
            cells.append({'interval':[str(lo),str(hi)],'bernstein_upper':str(max(row))})
        else:
            assert d<24,'Segment certificate failed.'
            l,rr=split(row);mid=(lo+hi)/2
            stack.extend([(l,d+1,lo,mid),(rr,d+1,mid,hi)])
    cells.sort(key=lambda x:F(x['interval'][0]))
    assert cells[0]['interval'][0]=='0' and cells[-1]['interval'][1]=='1'
    assert all(x['interval'][1]==y['interval'][0] for x,y in zip(cells,cells[1:]))
    return {'dyadic_leaves':leaves,'maximum_depth':max_depth,'cells':cells}
def certify_ray(d):
    coeff=real_on_line((F(0),F(0)),d)
    upper=coeff[7]+sum(max(x,F(0)) for x in coeff[:7])
    assert upper<-kappa
    return str(upper)
def linear_interval(A,box,constant=F(0)):
    lo=hi=constant
    for c,(l,h) in zip(A,box):
        if c>=0:lo+=c*l;hi+=c*h
        else:lo+=c*h;hi+=c*l
    return lo,hi
def graph_sign(fence,box):
    T=fence['T'];S=fence['S'];points=fence['points']
    ts=[sum(c*x for c,x in zip(T,p)) for p in points]
    ss=[sum(c*x for c,x in zip(S,p)) for p in points]
    tlo,thi=linear_interval(T,box)
    if thi<=ts[0]:m=ss[0]/ts[0];d=F(0)
    elif tlo>=ts[-1]:m=ss[-1]/ts[-1];d=F(0)
    else:
        found=False
        for j in range(len(ts)-1):
            if ts[j]<=tlo<=thi<=ts[j+1]:
                 m=(ss[j+1]-ss[j])/(ts[j+1]-ts[j]);d=ss[j]-m*ts[j];found=True;break
        assert found,'Root box crosses a graph knot.'
    lo,hi=linear_interval([S[j]-m*T[j] for j in range(2)],box,-d)
    assert lo>0 or hi<0
    return 1 if lo>0 else -1

def verify(data):
    assert all(Q[j]==g[j]*e**(7-j) for j in range(1,8))
    assert r**7*(1+e**7*kappa)>1
    A=1-e**6;B=A-b*e**5;C=B-a*e**4
    cubic=[C-2*A,2*(B-3),4*A,F(8)]
    pairs=[(F(3,5),F(13,20),F(3,4),F(4,5)),
           (-F(1,4),-F(1,5),F(24,25),F(49,50)),
           (-F(91,100),-F(9,10),F(41,100),F(11,25))]
    for lo,hi,vl,vh in pairs:
        assert evaluate(cubic,lo)*evaluate(cubic,hi)<0
        largest=max(abs(lo),abs(hi));smallest=min(abs(lo),abs(hi))
        assert largest**2+vl**2<1 and smallest**2+vh**2>=1
    S=[F(1),A,B,C,B,A,F(1)]
    assert [-S[0]]+[S[j-1]-S[j] for j in range(1,7)]+[S[-1]]==Q
    boxes=[[(F(1)/e,F(1)/e),(F(0),F(0))]]
    labels=['1']
    for j,(lo,hi,vl,vh) in enumerate(pairs,1):
        boxes.append([(lo/e,hi/e),(vl/e,vh/e)]);labels.append('upper_'+str(j))
        boxes.append([(lo/e,hi/e),(-vh/e,-vl/e)]);labels.append('lower_'+str(j))
    fences=[];reports=[]
    for raw in data:
        f={'name':raw['name'],'T':[rational(x) for x in raw['T']],'S':[rational(x) for x in raw['S']],
           'points':[tuple(rational(x) for x in p) for p in raw['points']]}
        T,S,points=f['T'],f['S'],f['points']
        assert T[0]*S[1]-T[1]*S[0]!=0
        ts=[sum(c*x for c,x in zip(T,p)) for p in points]
        assert ts[0]<0<ts[-1] and all(x<y for x,y in zip(ts,ts[1:]))
        seg=[certify_segment(p,q) for p,q in zip(points,points[1:])]
        rays=[certify_ray(points[0]),certify_ray(points[-1])]
        reports.append({'name':f['name'],'segments':seg,'ray_upper_bounds':rays})
        fences.append(f)
        if f['name']!='root_1':
            fences.append({'name':f['name'].replace('upper','lower'),
                            'T':[T[0],-T[1]],'S':[S[0],-S[1]],
                            'points':[(x,-y) for x,y in points]})
    table=[[graph_sign(f,box) for f in fences] for box in boxes]
    equal=[(i,j) for i in range(7) for j in range(i+1,7) if table[i]==table[j]]
    assert equal==[(3,4)],'Only the middle-left conjugate pair may remain connected.'
    # Entire short horizontal obstacles, independently certified in Bernstein form.
    upper_short=certify_segment((-F(3,100),F(1,100)),(-F(1,1000),F(1,100)))
    # Geometry constants from the all-path argument.
    eta=F(1,10**8);h=e/100;cross=3*eta;budget=(1+eta/2)/r
    assert r**(-7)-1<8*delta
    assert F(5,2)*(budget**2-1)<cross and 8*delta/e**6<cross
    assert F(251,250)**2<F(101,100) and F(49,50)**2>F(24,25)
    assert F(251,250)-F(1,50)-F(49,50)==F(1,250)
    assert F(1581,500)**2<10 and F(917,1000)**2>F(21,25)
    assert F(1581,500)-3*F(917,1000)-F(2,5)>F(1,250)
    assert 2*r*(1+h/250-cross)>2+eta
    return {'proof_method':'continuous piecewise-linear graph separators; rational Bernstein certificates',
            'fences':[f['name'] for f in fences],'root_labels':labels,'root_signs':table,
            'unseparated_pair':[labels[i] for i in equal[0]],'segment_certificates':reports,
            'short_horizontal_certificate':upper_short,'length_threshold':str(2+eta)}

if __name__=='__main__':
    data=json.loads(Path(sys.argv[1]).read_text())
    result=verify(data)
    print(json.dumps(result,indent=2))
    if len(sys.argv)>2:Path(sys.argv[2]).write_text(json.dumps(result,indent=2))
\end{lstlisting}

\begingroup\raggedright

\endgroup
\end{document}